\documentclass{article}

\usepackage{amsmath,amscd,amsfonts,amssymb,amsthm}
\usepackage{graphicx}
\usepackage{epstopdf}
\usepackage{a4wide}
\usepackage{graphicx}
\usepackage{subfigure}
\usepackage[latin1]{inputenc}
\usepackage{float}
\usepackage{placeins}
\usepackage{multirow}


\newcommand{\subj}[1]{\par\noindent{\bf Mathematics Subject Classification 2010: }#1.}
\newcommand{\keyw}[1]{\par\noindent{\bf Keywords: }#1.}


\theoremstyle{definition}
\newtheorem{definition}{Definition}
\newtheorem{theorem}{Theorem}
\newtheorem{example}{Example}
\theoremstyle{remark}


\def\a{\alpha}
\def\t{\triangle T}
\def\DS{\displaystyle}
\def\LHD{{_a\mathcal{D}_t^\a}}
\def\HD{{_1\mathcal{D}_t^{\a}}}
\def\RHD{{_t\mathcal{D}_b^\a}}

\begin{document}

\title{A Discretization of the Hadamard fractional derivative}

\author{Ricardo Almeida$^1$\\
{\tt ricardo.almeida@ua.pt}
\and
 Nuno R. O. Bastos$^{1,2}$\\
{\tt nbastos@estv.ipv.pt}}

\date{$^1$Center for Research and Development in Mathematics and Applications (CIDMA)\\
Department of Mathematics, University of Aveiro, 3810--193 Aveiro, Portugal\\
$^2$Department of Mathematics, School of Technology and Management of Viseu \\
Polytechnic Institute of Viseu, 3504--510 Viseu, Portugal}

\maketitle


\begin{abstract}
We present a new discretization for the Hadamard fractional derivative, that simplifies the computations. We then apply the method
 to solve a fractional differential equation  and a fractional variational problem with dependence on the Hadamard fractional derivative.
\end{abstract}

\subj{26A33, 49M25, 49M25}

\keyw{fractional calculus, discretization methods}


\section{Introduction}

Similarly to ordinary calculus, we can find in the literature distinct definitions for fractional derivatives and for fractional integrals, which are
 generalizations of the integer-order derivatives and multiple integrals, respectively. The most common ones and consequently more studied are the Riemann--Liouville,
 Caputo and Gr\"{u}nwald--Letnikov  definitions. We deal in this paper with the Hadamard fractional derivative, introduced in \cite{Hadamard}. Recently, it has
 call the attention of researchers and numerous results have appeared, with an extensive study of properties of such kind of operators
\cite{Butzer,Butzer1,Kilbas3}.
 For recent results we suggest \cite{Jarad,Qassim,Qian}.

 Due the complexity of solving equations involving fractional operators, in most cases is impossible to determine the exact solution and so numerical methods
 are used to determine an approximated solution of the problem.
 This is an emerging field, and we can find already in the literature several methods to deal with these problems,
 at least for the most common fractional derivative types.
 For the Hadamard fractional derivative, we mention the recent paper \cite{Almeida}, where the fractional operator is replaced by a finite sum
 involving only integer-order derivatives of the function. Replacing the fractional derivative by this sum, we rewrite the initial problem in terms of
 integer-order derivatives and thus we are able to apply classical known methods. In \cite{Butzer2} another approximation formula is obtained, using also
 integer-order derivatives only. The disadvantage is that in order to have a good approximation, we need to use higher-order derivatives, which
 may not be adequate for fractional problems. In this paper we follow a different path, by discretizing the fractional derivative, and then
  convert continuous problems into discrete ones.

  To start, let us recall the definition of the Hadamard fractional derivative.

\begin{definition}
Let $a,b$ be two reals with  $0<a<b$, and $x:[a,b]\to\mathbb{R}$ be a function.
For $\a\in(0,1)$, the left Hadamard fractional derivative of order $\a$  is defined by
$$\LHD x(t)=\frac{t}{\Gamma(1-\alpha)}\frac{d}{dt}\int_a^t \left(\ln\frac{t}{\tau}\right)^{-\alpha}\frac{x(\tau)}{\tau}\,d\tau,$$
while the right Hadamard fractional derivative of order $\a$  by
$$\RHD x(t)=\frac{-t}{\Gamma(1-\alpha)}\frac{d}{dt}\int_t^b \left(\ln\frac{\tau}{t}\right)^{-\a}\frac{x(\tau)}{\tau}d\tau,$$
where $\Gamma$ denotes the Gamma function.
\end{definition}

When $x$ is an absolutely continuous function, there exists an equivalent definition (cf. \cite{Kilbas2})
$$\LHD x(t)=\frac{x(a)}{\Gamma(1-\a)} \left(\ln\frac{t}{a}\right)^{-\alpha}
+\frac{1}{\Gamma(1-\alpha)}\int_a^t \left(\ln\frac{t}{\tau}\right)^{-\alpha}\dot{x}(\tau)\,d\tau,$$
and
$$\RHD x(t)=\frac{x(b)}{\Gamma(1-\a)}\left(\ln\frac{b}{t}\right)^{-\alpha}
-\frac{1}{\Gamma(1-\alpha)}\int_t^b \left(\ln\frac{\tau}{t}\right)^{-\a}\dot{x}(\tau)d\tau.$$

More properties can be found in references at the end.
The paper is organized in the following way. In Section~\ref{sec:2} we present the main result of the paper. Starting with the definition,
and with an appropriate
grid on time, we present a new discrete version for the left Hadamard fractional derivative. To show the efficiency of the method,
in Section \ref{sec:example} we
compare the exact expression of a fractional derivative with some numerical experiments, for different values of $\a$ and different step sizes $n$.
In Section
\ref{sec:app} we appply the technique to solve a fractional differential equation and a fractional calculus of variation problem.

\section{The discretization method}
\label{sec:2}

The discretization method is described in the following way. Given a function $x:[a,b]\to\mathbb{R}$ , fix a positive integer $n$, and define the time step
$$\triangle T=\frac{\ln\frac{b}{a}}{n}.$$
Given $N\in\{0,1,\ldots,n\}$, denote the time and space grid by
$$t_N=a \exp(N\t)=a\sqrt[n]{\left(\frac{b}{a}\right)^N} \quad \mbox{and} \quad x_N=x(t_N).$$

\begin{theorem} Let $x:[a,b]\to\mathbb{R}$ be a function of class $C^2$ and $n\in\mathbb N$. Denote
$$\psi=\frac{(\t)^{1-\a}}{a(1-\exp(-\t))\Gamma(2-\alpha)} \quad \mbox{and} \quad \left(\omega_k^\a\right)=k^{1-\a}-(k-1)^{1-\a}.$$
Then, for all $N\in\{1,\ldots,n\}$,
$${_a\mathcal{D}_{t_N}^\a}x_N=\tilde{{_a\mathcal{D}_{t_N}^\a}} x_N+  O(\t)  ,$$
where
\begin{equation}\label{appr}
\tilde{{_a\mathcal{D}_{t_N}^\a}} x_N=\frac{x(a)}{\Gamma(1-\a)} \left(\ln\frac{t_N}{a}\right)^{-\alpha}
+\psi\sum_{k=1}^N\left(\omega_{N-k+1}^\a\right)\frac{x_k-x_{k-1}}{\exp(k\t)}\cdot t_k,
\end{equation}
and
$$\lim_{\t\to0}O(\t)=0.$$
\end{theorem}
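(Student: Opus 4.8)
The plan is to start from the absolutely continuous form of the left Hadamard derivative recalled in the excerpt, evaluated at $t=t_N$. Since $\ln(t_N/a)=N\t$ and $x(a)=x_0$, the boundary term $\frac{x(a)}{\Gamma(1-\a)}(\ln\frac{t_N}{a})^{-\a}$ appears \emph{verbatim} in \eqref{appr}; it therefore contributes no error, and the whole statement reduces to approximating
$$I:=\frac{1}{\Gamma(1-\a)}\int_a^{t_N}\left(\ln\frac{t_N}{\tau}\right)^{-\a}\dot x(\tau)\,d\tau$$
by the sum in \eqref{appr}. The first move I would make is the substitution $u=\ln(\tau/a)$, i.e. $\tau=ae^u$, $d\tau=ae^u\,du$, which sends each grid point $t_k$ to the equally spaced node $k\t$ and turns the kernel into $(\ln\frac{t_N}{\tau})^{-\a}=(N\t-u)^{-\a}$. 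This recasts $I$ as a Riemann--Liouville-type integral on a uniform mesh, exactly the structure the weights $\omega_k^\a$ are built for.

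Next I would discretize. Splitting $I$ over the subintervals $[t_{k-1},t_k]$ (equivalently $[(k-1)\t,k\t]$ in $u$), I would approximate $\dot x(\tau)$ on each piece by the difference quotient $\frac{x_k-x_{k-1}}{t_k-t_{k-1}}$ and freeze the remaining factor $ae^u=\tau$ at the right endpoint $t_k$. The surviving kernel integral is then elementary:
$$\int_{(k-1)\t}^{k\t}(N\t-u)^{-\a}\,du=\frac{(\t)^{1-\a}}{1-\a}\Big[(N-k+1)^{1-\a}-(N-k)^{1-\a}\Big]=\frac{(\t)^{1-\a}}{1-\a}\,\omega_{N-k+1}^\a.$$
Collecting constants, using $\Gamma(2-\a)=(1-\a)\Gamma(1-\a)$, $t_k-t_{k-1}=t_k(1-e^{-\t})$ and $t_k/e^{k\t}=a$, the resulting sum is precisely $\psi\sum_{k=1}^N\omega_{N-k+1}^\a\frac{x_k-x_{k-1}}{e^{k\t}}t_k=\tilde{{_a\mathcal{D}_{t_N}^\a}}x_N$. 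This bookkeeping is routine and merely verifies that the stated coefficient $\psi$ is the correct one.

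The substance of the proof, and the step I expect to be the main obstacle, is the error estimate $I-\tilde{{_a\mathcal{D}_{t_N}^\a}}x_N=O(\t)$ \emph{uniformly in} $N$. I would split this error into two parts. The first comes from replacing $\dot x$ by the difference quotient: since $x\in C^2$, the mean value theorem gives $\frac{x_k-x_{k-1}}{t_k-t_{k-1}}=\dot x(\xi_k)$ for some $\xi_k$, whence $|\dot x(\tau)-\dot x(\xi_k)|\le\|\ddot x\|_\infty(t_k-t_{k-1})\le\|\ddot x\|_\infty\,b(1-e^{-\t})=O(\t)$ uniformly. The second comes from freezing $ae^u$ at $t_k$, controlled by $|ae^u-t_k|\le t_k(1-e^{-\t})\le b(1-e^{-\t})=O(\t)$ on each subinterval. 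In both cases I would pull the $O(\t)$ factor out and be left with the bare kernel, whose integral over the whole range is harmless:
$$\int_a^{t_N}\left(\ln\frac{t_N}{\tau}\right)^{-\a}d\tau=\int_0^{N\t}(N\t-u)^{-\a}ae^u\,du\le b\,\frac{(N\t)^{1-\a}}{1-\a}\le\frac{b\,(\ln(b/a))^{1-\a}}{1-\a}.$$

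The delicate point hidden here is the integrable singularity of the kernel at $\tau=t_N$ (the term $k=N$): one cannot bound the kernel pointwise, but because each error contribution is an $O(\t)$ quantity times a locally integrable kernel, the singularity is absorbed by the integration, and the bound is uniform in $N$ precisely because $N\t\le\ln(b/a)$ is fixed. Assembling the two pieces yields a bound $C\t$ with $C$ depending only on $a,b,\a,\|\dot x\|_\infty,\|\ddot x\|_\infty$, which forces $\lim_{\t\to0}O(\t)=0$ and completes the argument.
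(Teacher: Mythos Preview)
Your derivation of the approximation formula is essentially the paper's own argument, reparametrized by the substitution $u=\ln(\tau/a)$: the paper writes the integrand as $\bigl(\ln\frac{t_N}{\tau}\bigr)^{-\alpha}\frac{1}{\tau}\cdot\bigl(\dot x(\tau)\,\tau\bigr)$ and replaces the product $\dot x(\tau)\,\tau$ on each $[t_{k-1},t_k]$ by $\frac{x_k-x_{k-1}}{t_k-t_{k-1}}\,t_k$, which is exactly your two steps (replace $\dot x$ by the difference quotient, freeze $\tau=ae^u$ at $t_k$) done simultaneously. The remaining kernel integral is evaluated identically in both arguments.

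Where you differ is in the error analysis, and your version is actually sharper. The paper bounds $\bigl|\dot x(\tau)\tau-\frac{x_k-x_{k-1}}{t_k-t_{k-1}}t_k\bigr|\le (t_k-t_{k-1})\bigl[M_1+\tfrac32 M_2 b\bigr]$ via Taylor, then estimates each subinterval kernel integral by the worst case $\int_{t_{k-1}}^{t_k}\bigl(\ln\frac{t_N}{\tau}\bigr)^{-\alpha}\frac{d\tau}{\tau}\le \int_{t_{k-1}}^{t_k}\bigl(\ln\frac{t_k}{\tau}\bigr)^{-\alpha}\frac{d\tau}{\tau}=\frac{(\triangle T)^{1-\alpha}}{1-\alpha}$, and sums to obtain the explicit bound $E\le\frac{M_1+\frac32 M_2 b}{\Gamma(2-\alpha)}(b-a)(\triangle T)^{1-\alpha}$. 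You instead pull out the uniform $O(\triangle T)$ factor first and then sum the untouched kernel over the whole range, getting $\int_a^{t_N}\bigl(\ln\frac{t_N}{\tau}\bigr)^{-\alpha}d\tau=O(1)$; this avoids the overcounting that comes from treating every subinterval as if it were adjacent to the singularity and yields a genuine $O(\triangle T)$ rate rather than $O((\triangle T)^{1-\alpha})$. Both bounds prove the theorem as stated (the paper's ``$O(\triangle T)$'' is being used loosely to mean a quantity tending to zero with $\triangle T$), but yours gives the better convergence order.
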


\begin{proof}
$$\begin{array}{ll}
{_a\mathcal{D}_{t_N}^\a}x_N& = \DS \frac{x(a)}{\Gamma(1-\a)} \left(\ln\frac{t_N}{a}\right)^{-\alpha}
+\frac{1}{\Gamma(1-\alpha)}\int_a^{t_N} \left(\ln\frac{t_N}{\tau}\right)^{-\alpha}\dot{x}(\tau)\,d\tau\\
& = \DS \frac{x(a)}{\Gamma(1-\a)} \left(\ln\frac{t_N}{a}\right)^{-\alpha}
+\frac{1}{\Gamma(1-\alpha)}\sum_{k=1}^N\int_{t_{k-1}}^{t_k} \left(\ln\frac{t_N}{\tau}\right)^{-\alpha}\frac{1}{\tau}\left(\dot{x}(\tau)\tau\right)\,d\tau\\
& \approx \DS \frac{x(a)}{\Gamma(1-\a)} \left(\ln\frac{t_N}{a}\right)^{-\alpha}\\
&\quad \DS +\frac{1}{\Gamma(1-\alpha)}\sum_{k=1}^N\int_{t_{k-1}}^{t_k} \left(\ln\frac{t_N}{\tau}\right)^{-\alpha}\frac{1}{\tau}
\left(\frac{x_k-x_{k-1}}{a(1-\exp(-\t))\exp(k\t)}\cdot t_k\right)\,d\tau\\
& = \DS \frac{x(a)}{\Gamma(1-\a)} \left(\ln\frac{t_N}{a}\right)^{-\alpha}\\
&\quad \DS +\frac{1}{a(1-\exp(-\t))\Gamma(1-\alpha)}\sum_{k=1}^N\frac{x_k-x_{k-1}}{\exp(k\t)}\cdot t_k
\int_{t_{k-1}}^{t_k} \left(\ln\frac{t_N}{\tau}\right)^{-\alpha}\frac{1}{\tau}\,d\tau\\
& = \DS \frac{x(a)}{\Gamma(1-\a)} \left(\ln\frac{t_N}{a}\right)^{-\alpha}\\
&\quad \DS +\frac{(\t)^{1-\a}}{a(1-\exp(-\t))\Gamma(2-\alpha)}\sum_{k=1}^N\frac{x_k-x_{k-1}}{\exp(k\t)}\cdot t_k
\left[(N-k+1)^{1-\a}-(N-k)^{1-\a}\right].\\
\end{array}$$
Thus, we get the desired approximation formula. Now, let us determine an upper bound for the error when we use formula \eqref{appr}. The error is given by
$$E=\frac{1}{\Gamma(1-\alpha)}\sum_{k=1}^N\int_{t_{k-1}}^{t_k} \left(\ln\frac{t_N}{\tau}\right)^{-\alpha}\frac{1}{\tau}
\left|\dot{x}(\tau)\tau-\frac{x_k-x_{k-1}}{t_k-t_{k-1}}\cdot t_k\right|\,d\tau.$$
Let
$$M_i=\max_{\tau\in[a,b]}\left|x^{(i)}(\tau)\right|\, , \quad i=1,2.$$
Then, using Taylor's Theorem, we get that, for all $k\in\{1,\ldots,N\}$ and for all $\tau\in[t_{k-1},t_k]$,
$$\begin{array}{l}
\DS\left|\dot{x}(\tau)\tau-\frac{x_k-x_{k-1}}{t_k-t_{k-1}}\cdot t_k\right|\\
\DS=\left|\dot{x}(\tau)\tau-\left(\dot{x}(t_{k-1})+\ddot{x}(\xi_1)\frac{t_k-t_{k-1}}{2}\right)\cdot t_k\right|\\
\DS\leq \left|\dot{x}(\tau)\tau-\dot{x}(t_{k-1})t_k\right|+M_2\frac{t_k-t_{k-1}}{2}\cdot t_k\\
\DS=\left|\left(\dot{x}(t_{k-1})+\ddot{x}(\xi_2)(\tau-t_{k-1})\right)\tau-\dot{x}(t_{k-1})t_k\right|+M_2\frac{t_k-t_{k-1}}{2}\cdot t_k\\
\DS\leq M_1(t_k-\tau)+M_2(\tau-t_{k-1})\tau+M_2\frac{t_k-t_{k-1}}{2}\cdot t_k\\
\DS\leq (t_k-t_{k-1})\left[M_1+\frac32 M_2b\right].
\end{array}$$
Therefore, the error is bounded by
$$\begin{array}{ll}E& \leq\displaystyle \frac{1}{\Gamma(1-\alpha)}\sum_{k=1}^N\int_{t_{k-1}}^{t_k} \left(\ln\frac{t_N}{\tau}\right)^{-\alpha}\frac{1}{\tau}
(t_k-t_{k-1})\left[M_1+\frac32 M_2b\right] \,d\tau\\
&=\displaystyle  \frac{M_1+\frac32 M_2b}{\Gamma(1-\alpha)}\sum_{k=1}^N  (t_k-t_{k-1})
     \int_{t_{k-1}}^{t_k} \left(\ln\frac{t_N}{\tau}\right)^{-\alpha}\frac{1}{\tau}\, d\tau.
\end{array}$$
Having into consideration that, for all $k\in\{1,\ldots,N\}$ and for all $\tau\in[t_{k-1},t_k)$,
$$0\leq \left(\ln\frac{t_N}{\tau}\right)^{-\alpha} \leq \left(\ln\frac{t_k}{\tau}\right)^{-\alpha},$$
we have that
$$0 \leq  \int_{t_{k-1}}^{t_k} \left(\ln\frac{t_N}{\tau}\right)^{-\alpha}\frac{1}{\tau}\, d\tau\leq \frac{(\t)^{1-\a}}{1-\a}.$$
Also, since
$$t_k-t_{k-1}=a(1-\exp(-\t))\exp(k\t),$$
then
$$\begin{array}{ll}E& \leq\displaystyle
\frac{M_1+\frac32 M_2b}{\Gamma(2-\alpha)}a(1-\exp(-\t))(\t)^{1-\a} \sum_{k=1}^N  \exp(k\t)\\
&=\displaystyle \frac{M_1+\frac32 M_2b}{\Gamma(2-\alpha)}a(\t)^{1-\a}(\exp(N\t)-1)\\
&\leq\displaystyle \frac{M_1+\frac32 M_2b}{\Gamma(2-\alpha)}a(\t)^{1-\a}(\frac{b}{a}-1).\\
     \end{array}$$
In conclusion, we obtain the upper bound formula for our approximation \eqref{appr}:
\begin{equation}
\label{Error:Max}
E \leq \frac{M_1+\frac32 M_2b}{\Gamma(2-\alpha)}(b-a)(\t)^{1-\a}
\end{equation}
which converges to zero as $\t\to0$.
\end{proof}

In opposite to the classical case, where the concept of derivative is local, a fractional derivative contains memory, and thus to compute the
approximation obtained in Eq. \eqref{appr} at a point $t_N$, we need to know the values of $x(t_n)$ from the beginning until de end-point, i.e., from $n=0$
 to $n=N.$

For the right Hadamard fractional derivative, we have in a similar way the following approximation formula:
$${_{t_N}\mathcal{D}_b^\a} x_N\approx\frac{x(b)}{\Gamma(1-\a)} \left(\ln\frac{b}{t_N}\right)^{-\alpha}
-\psi\sum_{k=N+1}^{n}\left(\omega_{k-N}^\a\right)\frac{x_k-x_{k-1}}{\exp(k\t)}\cdot t_k.$$

\section{Example}\label{sec:example}

Let $x(t)=\ln t$, for $t\in[1,2]$. Then (see \cite{Kilbas})
$$\HD x(t)=\frac{(\ln t)^{1-\a}}{\Gamma(2-\a)}.$$
In Figure \ref{IntExp} we show the accuracy of the procedure, for different values of $\a\in\{0.2,0.5,0.7,0.9\}$ and for
different values of $n\in\{10,30,50\}$. The error of the numerical experiments is measured using the norm

\begin{equation}\label{Error}
d(x,y)=\frac{\sum_{k=1}^n|x_k-y_k|}{n}.
\end{equation}

\begin{figure}[h!]
  \begin{center}
    \subfigure[$\a=0.2$]{\label{fig1_c}\includegraphics[scale=0.5]{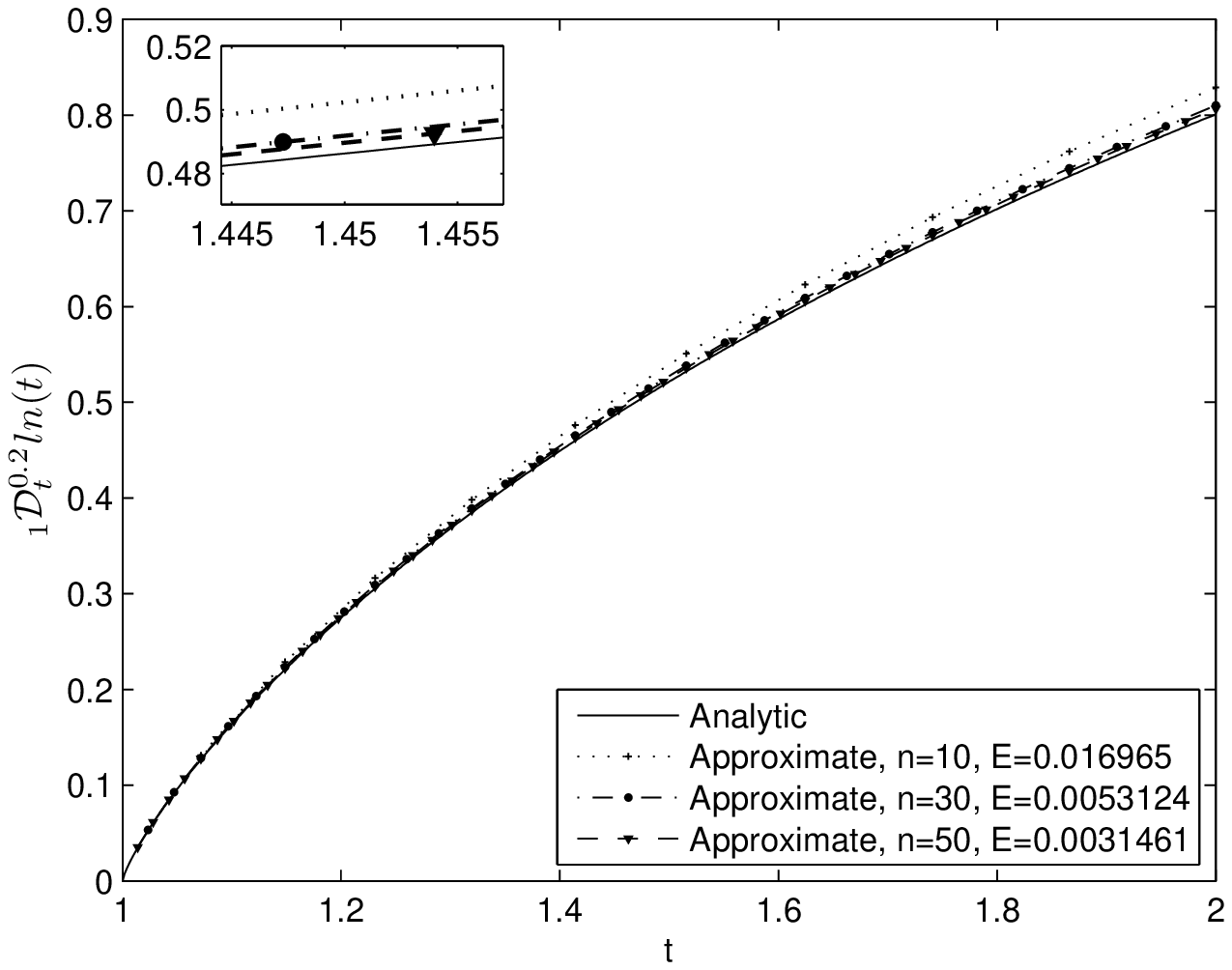}}
    \subfigure[$\a=0.5$]{\label{fig2_c}\includegraphics[scale=0.5]{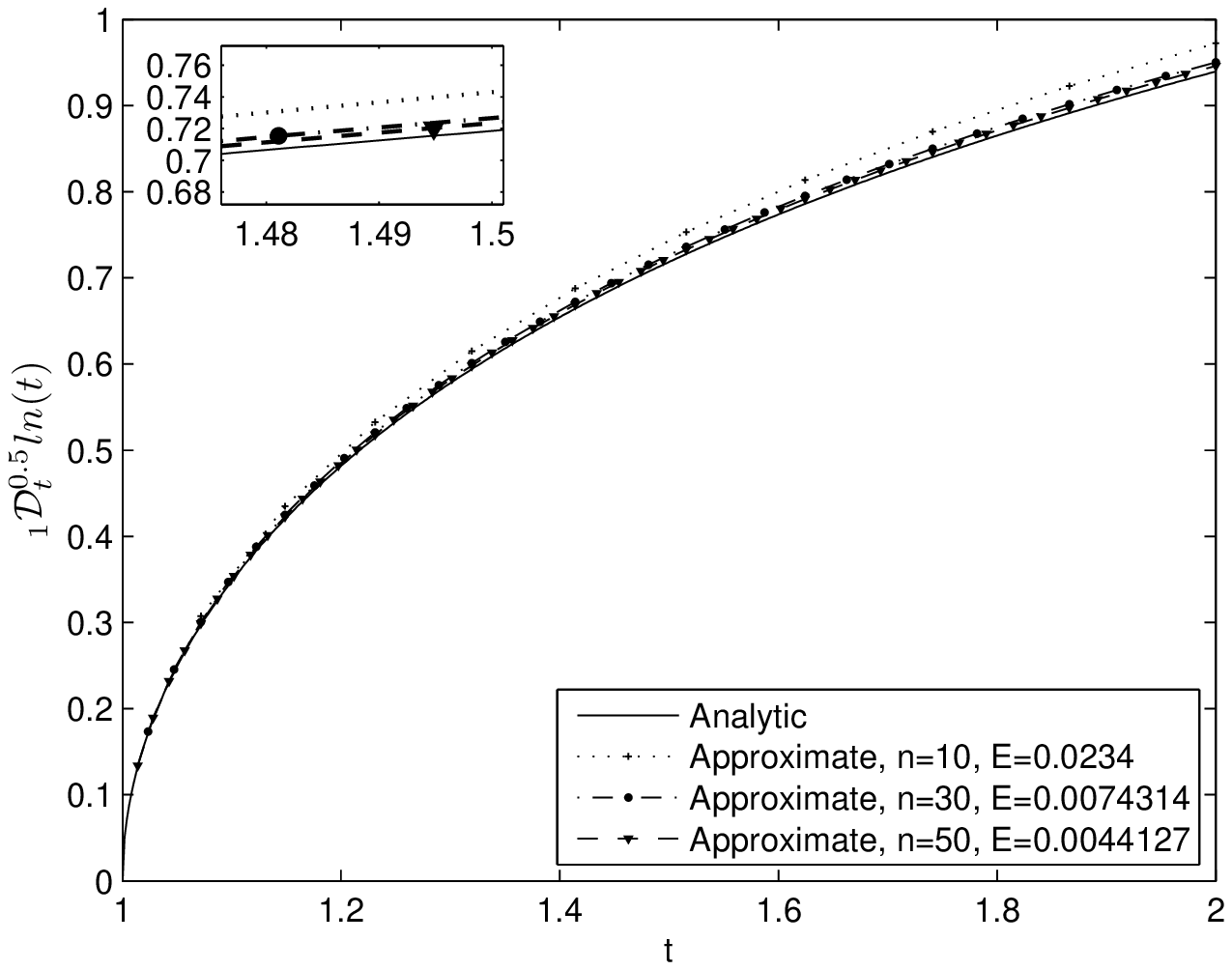}}
    \subfigure[$\a=0.7$]{\label{fig3_c}\includegraphics[scale=0.5]{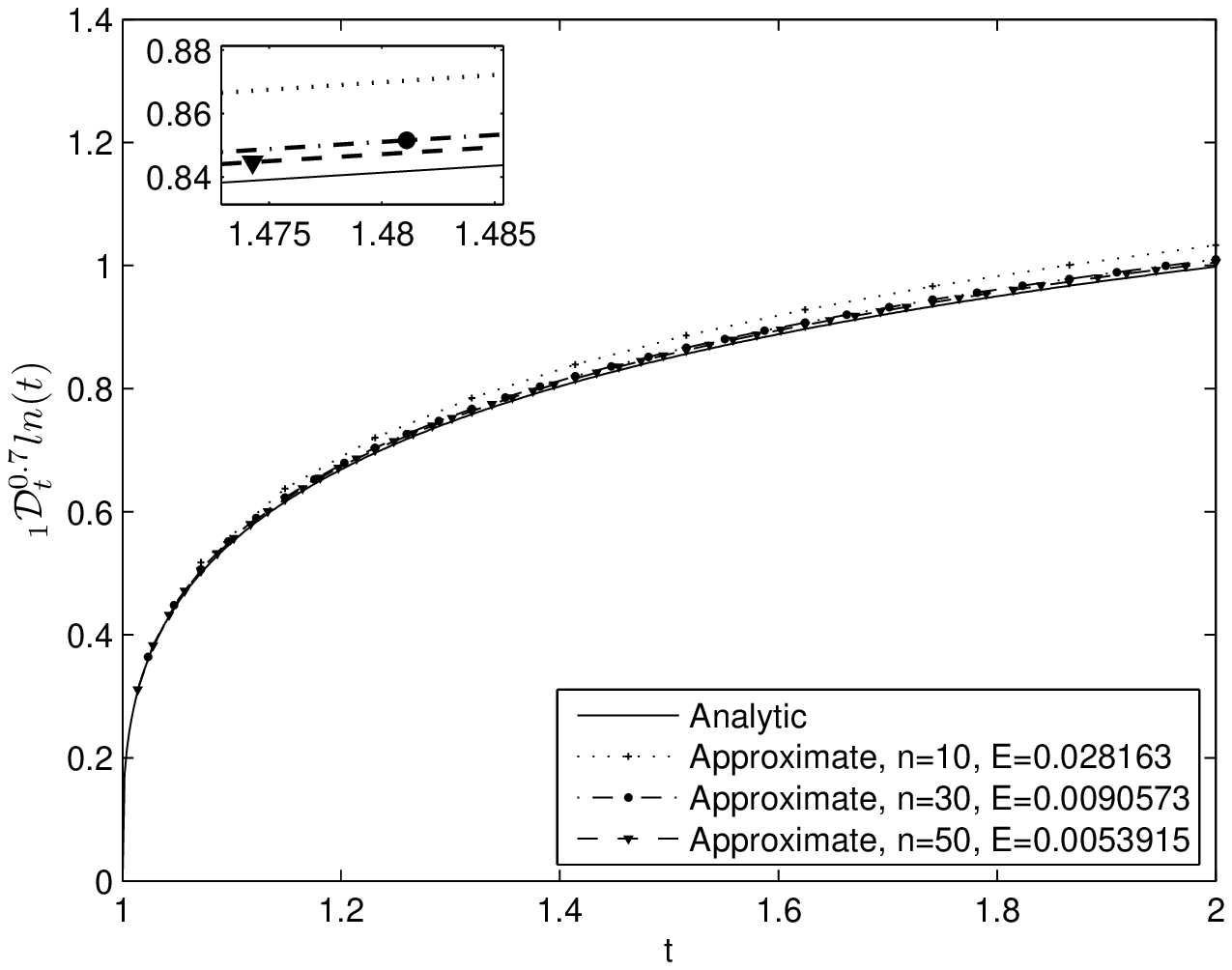}}
    \subfigure[$\a=0.9$]{\label{fig4_c}\includegraphics[scale=0.5]{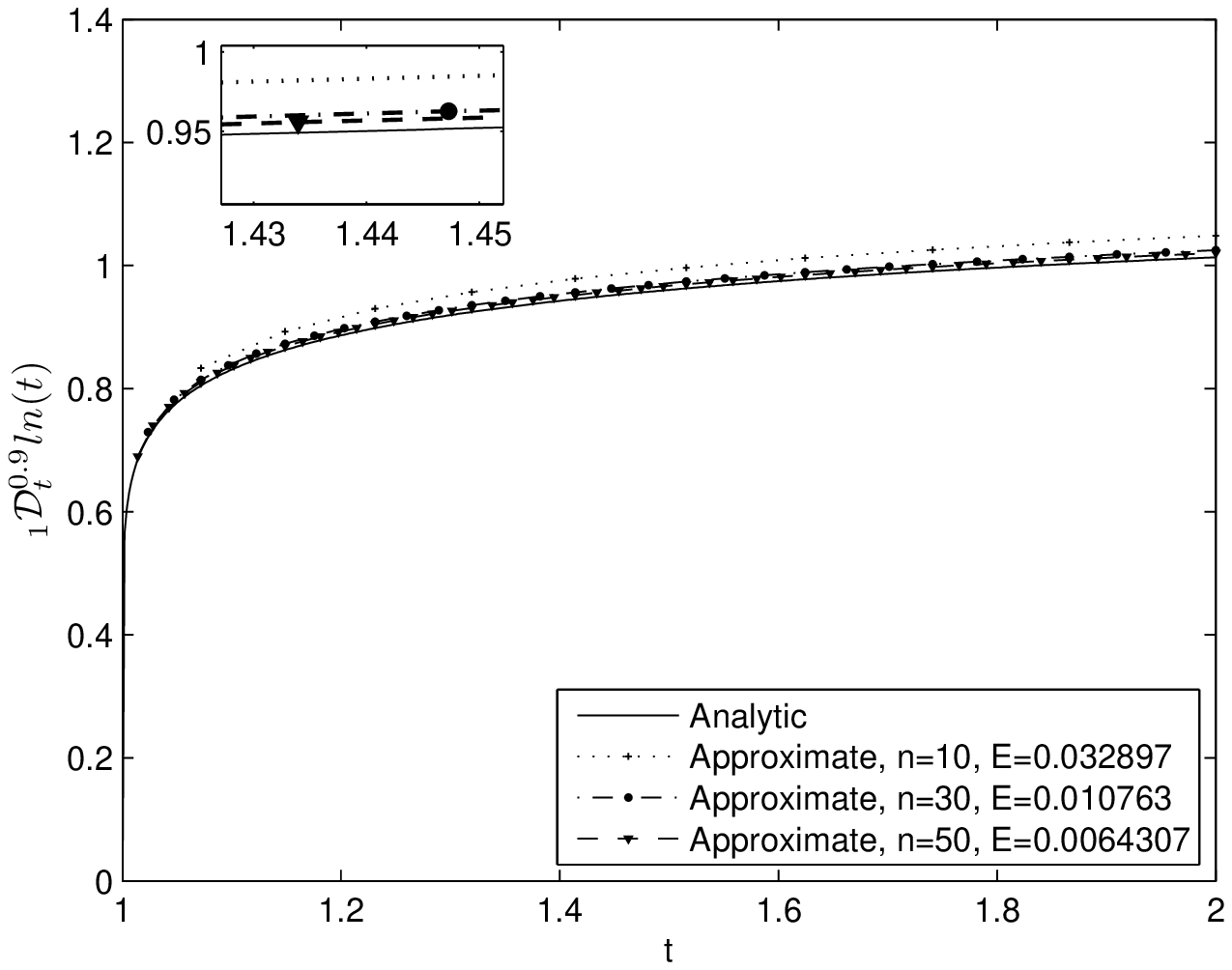}}
  \end{center}
  \caption{Analytic vs. numerical approximation.}
  \label{IntExp}
\end{figure}
We can see that, for a greater value of $n$, the error decreases as expected.

\section{Applications}\label{sec:app}

\begin{example}\label{Ex1} Consider a fractional differential equation with dependence on the left Hadamard fractional derivative:
$$\left\{\begin{array}{l}
\DS f\left(t,x(t),\LHD x(t)\right)=0, \quad t\in[a,b]\\
x(a)=x_a.
\end{array}\right.$$
The procedure to solve numerically the system is described next.
Fix $n\in\mathbb N$ and for $N\in\{1,\ldots,n\}$, define
$$t_N=a \exp(N\t),\quad x_N=x(t_N) \quad \quad \mbox{with} \quad \triangle T=\frac{\ln\frac{b}{a}}{n}.$$
Replacing the fractional operator by the approximation given in Eq. \eqref{appr}, we obtain a classical difference equation with $n$ unknown points
$x_1,\ldots, x_n$,
$$\left\{\begin{array}{l}
\DS f\left(t_N,x_N,\tilde{{_a\mathcal{D}_{t_N}^\a}} x_N\right)=0, \quad N\in\{1,\ldots,n\}\\
x_0=x_a.
\end{array}\right.$$

For example, consider the system
$$\left\{\begin{array}{l}
\DS\HD x(t)+x(t)=\frac{(\ln t)^{1-\a}}{\Gamma(2-\a)}+\ln t, \quad t\in[1,2]\\
x(1)=0.
\end{array}\right.$$
The obvious solution is $\overline x(t)=\ln t$.
Applying the discussed method, we obtain
$$\left\{\begin{array}{l}
\DS\psi\sum_{k=1}^N\left(\omega_{N-k+1}^\a\right)\frac{x_k-x_{k-1}}{\exp(k\t)}\cdot t_k
+x_N=\frac{(\ln t_N)^{1-\a}}{\Gamma(2-\a)}+\ln t_N, \quad N\in\{1,\ldots,n\}\\
x_0=0,
\end{array}\right.$$
with
$$\triangle T=\frac{\ln2}{n}, \quad t_N= \exp(N\t), \quad \psi=\frac{(\t)^{1-\a}}{(1-\exp(-\t))\Gamma(2-\alpha)}
\quad \mbox{and} \quad \omega_k=k^{1-\a}-(k-1)^{1-\a}.$$

In Figure \ref{Example1} we show the numerical results, for different values of $\a\in\{0.2,0.5,0.7,0.9\}$ and for
different values of $n\in\{5,15,30\}$.

\begin{figure}[h!]
  \begin{center}
    \subfigure[$\a=0.2$]{\label{fig_ex_1_1}\includegraphics[scale=0.5]{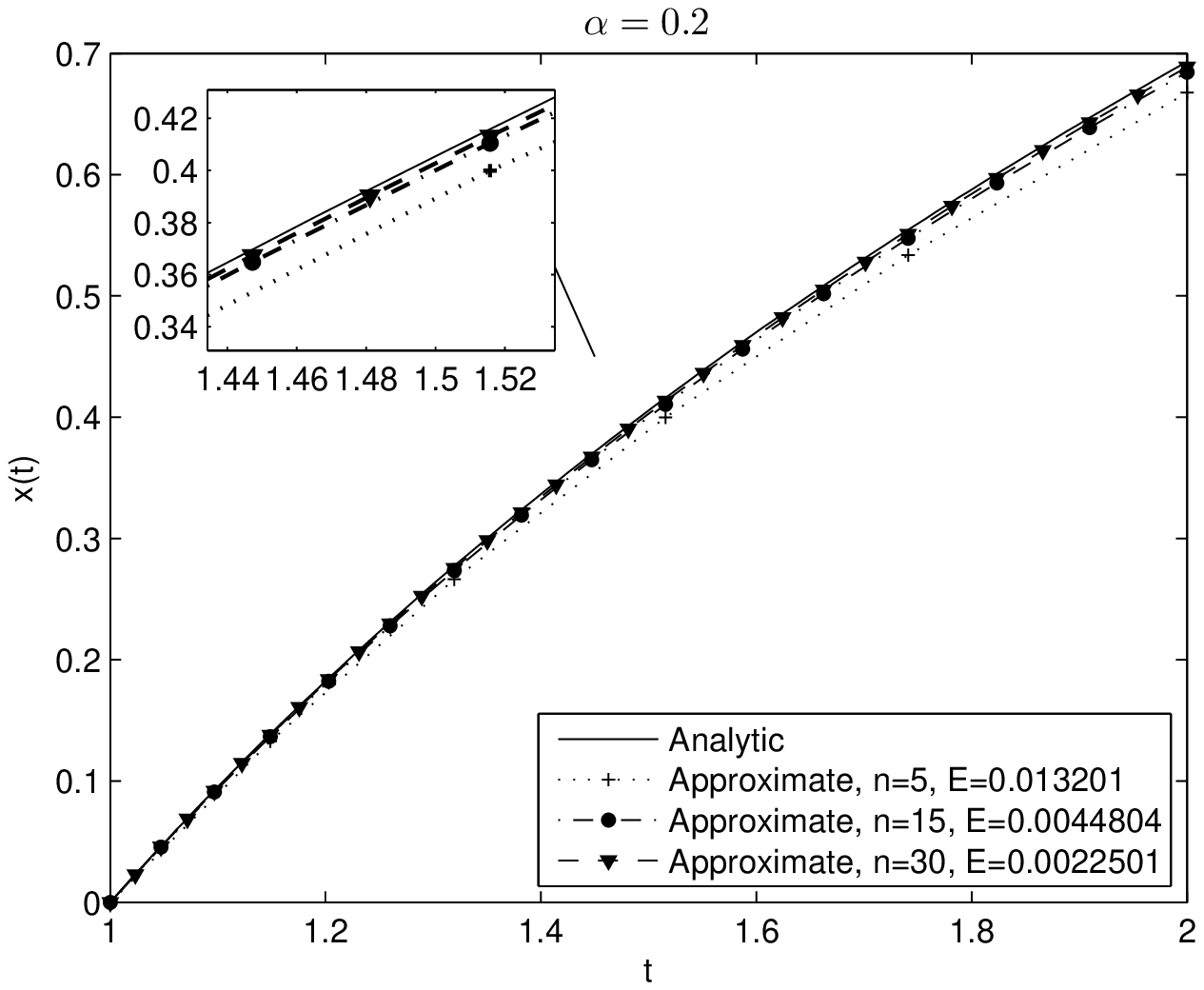}}
    \subfigure[$\a=0.5$]{\label{fig_ex_1_2}\includegraphics[scale=0.5]{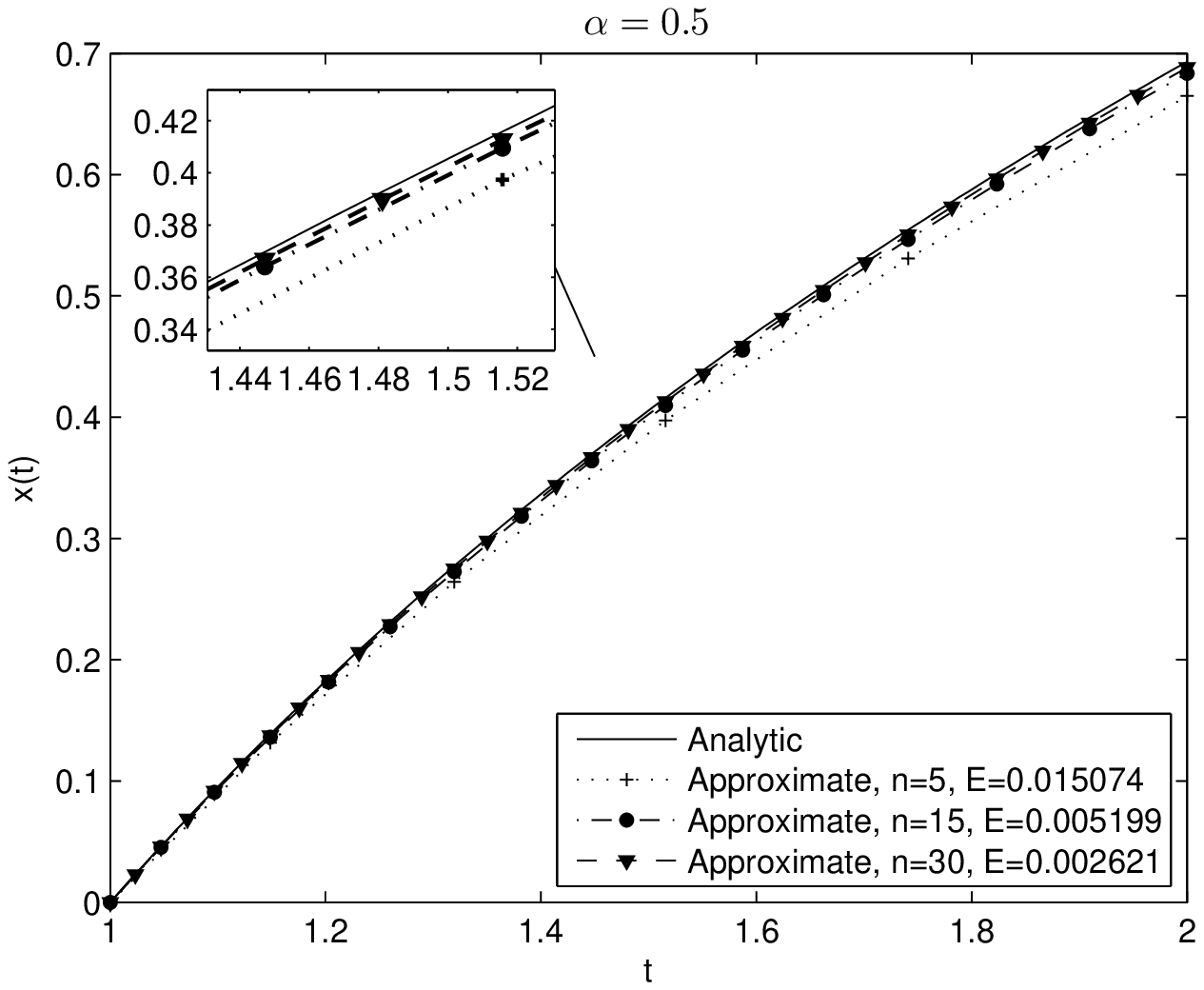}}
    \subfigure[$\a=0.7$]{\label{fig_ex_1_3}\includegraphics[scale=0.5]{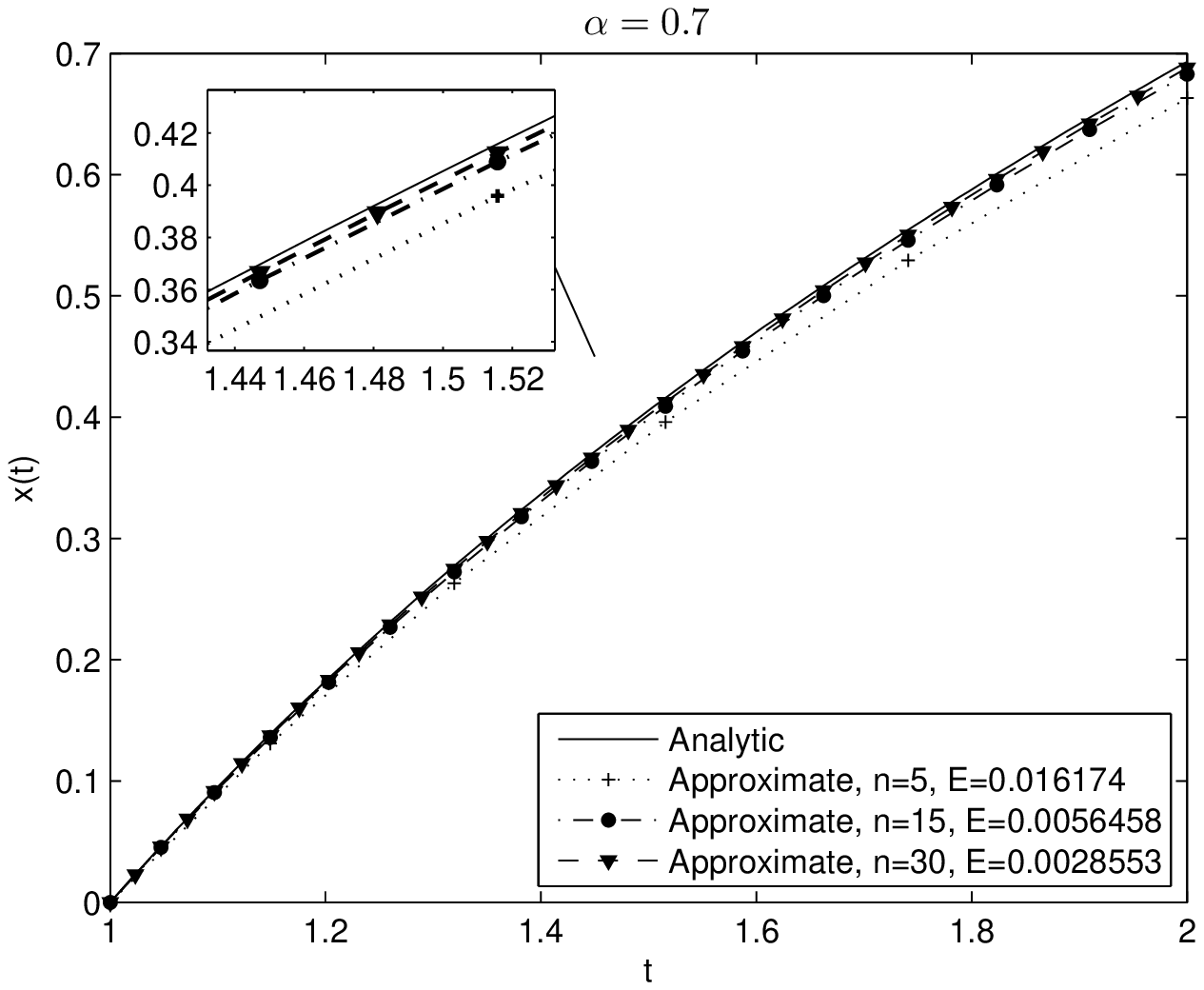}}
    \subfigure[$\a=0.9$]{\label{fig_ex_1_4}\includegraphics[scale=0.5]{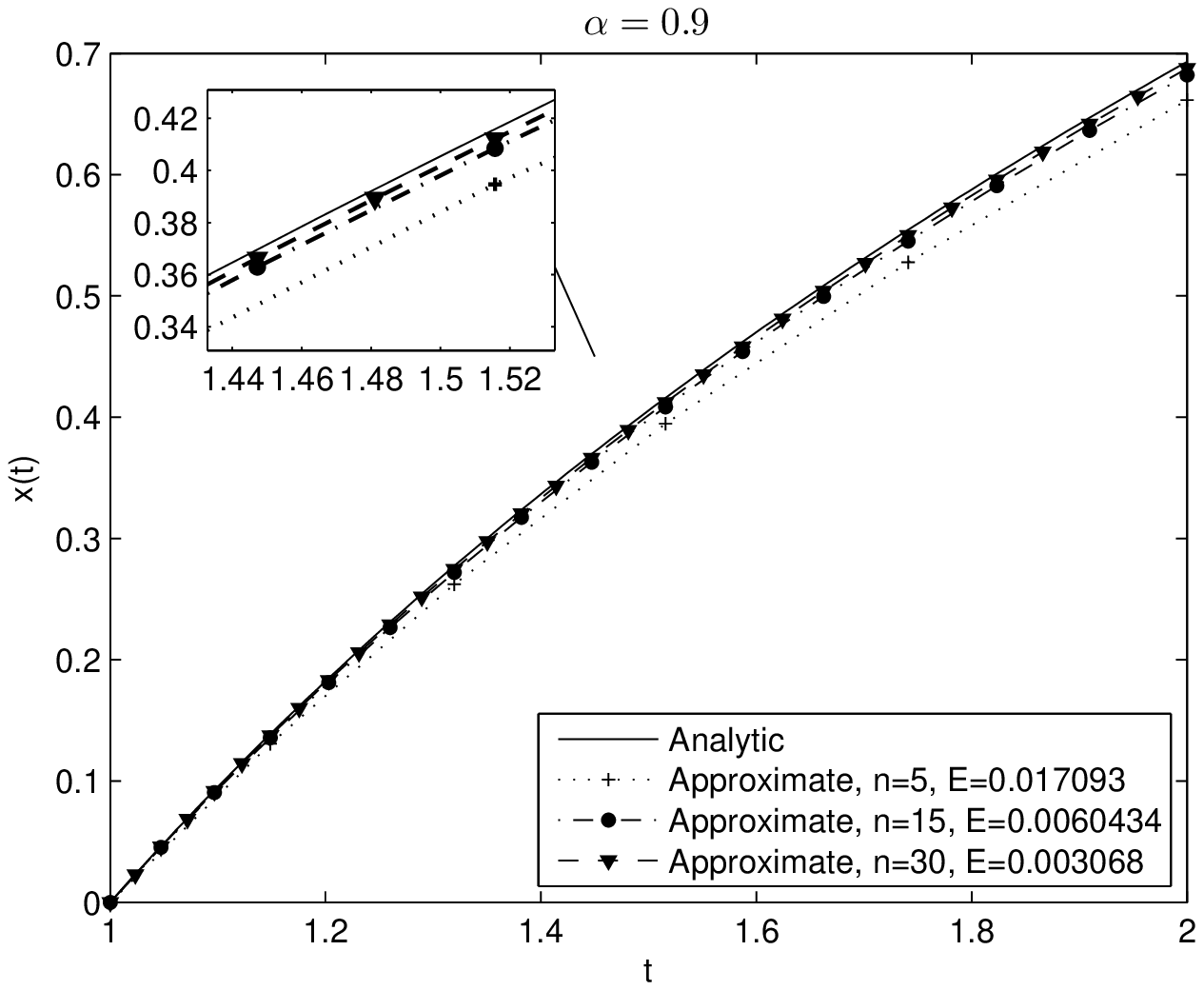}}
  \end{center}
  \caption{Analytic and approximated solutions for problem of Example \ref{Ex1}.}
  \label{Example1}
\end{figure}

\end{example}

\begin{example}\label{Ex2} For our next application, we show how to solve fractional variational problems with the Lagrangian depending on the Hadamard
fractional derivative.
Consider the functional
$$J(x)=\int_a^bL\left(t,x(t),\LHD x(t)\right)\,dt,$$
on the set on functions that satisfy the boundary conditions
$$x(a)=x_a \quad \mbox{and} \quad x(b)=x_b,$$
where $x_a,x_b$ are two fixed reals. The procedure how to find a numerical approximation is explained next.
First, divide the interval $[a,b]$ into $n$ subintervals $[t_{N-1},t_N]$ for
$N\in\{1,\ldots,n\}$, where
$$t_N=a \exp(N\t) \quad \mbox{and} \quad \triangle T=\frac{\ln\frac{b}{a}}{n}.$$
Denoting $x_N=x(t_N)$, applying the trapezoidal rule and taking into consideration Eq. \eqref{appr}, the variational integral is approximated as
$$
\begin{array}{ll}
J(x)&=\DS\sum_{N=1}^n\int_{t_{N-1}}^{t_N}f\left(t,x(t),\LHD x(t)\right)\,dt \\
&\DS\approx\frac{f\left(t_0,x_0,{_a\mathcal{D}_{t_0}^\a} x_0\right)(t_1-t_0)+f\left(t_n,x_n,{_a\mathcal{D}_{t_n}^\a} x_n\right)(t_n-t_{n-1})}{2}\\
&\quad \DS\quad+\sum_{N=1}^{n-1}\frac{f\left(t_N,x_N,{_a\mathcal{D}_{t_N}^\a} x_N\right)}{2}(t_{N+1}-t_{N-1})\\
&\DS\approx\frac{f\left(a,x_a,\tilde{{_a\mathcal{D}_{t_1}^\a}} x_1\right)(t_1-t_0)
+f\left(t_n,x_n,\tilde{{_a\mathcal{D}_{t_n}^\a}} x_n\right)(t_n-t_{n-1})}{2}\\
&\DS\quad+\sum_{N=1}^{n-1}\frac{f\left(t_N,x_N,\tilde{{_a\mathcal{D}_{t_N}^\a}} x_N\right)}{2}(t_{N+1}-t_{N-1}).
\end{array}$$
Observe that we used here the approximation
$${_a\mathcal{D}_{a}^\a} x_a\approx \tilde{{_a\mathcal{D}_{t_1}^\a}} x_1.$$
We can regard this sum as a function of $n-1$ unknown variables $\Psi(x_1,\ldots,x_{n-1})$, and then to find the optimal solution one needs to solve
the system
$$\frac{\partial \Psi}{\partial x_N}=0, \quad \mbox{for} \quad N\in\{1,\ldots, n-1\},$$
and with this we track the desired values  $(x_1,\ldots,x_{n-1})$. Observe that, in opposite to the classical case,
 $\partial \Psi/\partial x_N$ depends on the points $x_N,x_{N+1},\ldots,x_{n-1}$.

For example, we want the global minimizer for
$$J(x)=\int_1^2\left(\HD x(t)-\frac{(\ln t)^{1-\a}}{\Gamma(2-\a)}\right)^2\,dt,$$
with the restrictions
$$x(1)=0 \quad \mbox{and} \quad x(2)=\ln 2.$$
The optimal solution is $\overline x(t)=\ln t$ since the functional takes only non-negative values and vanishes when evaluated at $\overline x$.

In Figure \ref{Example2} we show the solution of the problem, for different values of $\a\in\{0.2,0.5,0.7,0.9\}$ and for
different values of $n\in\{5,15,30\}$.
\begin{figure}[h!]
  \begin{center}
    \subfigure[$\a=0.2$]{\label{fig_ex_2_1}\includegraphics[scale=0.5]{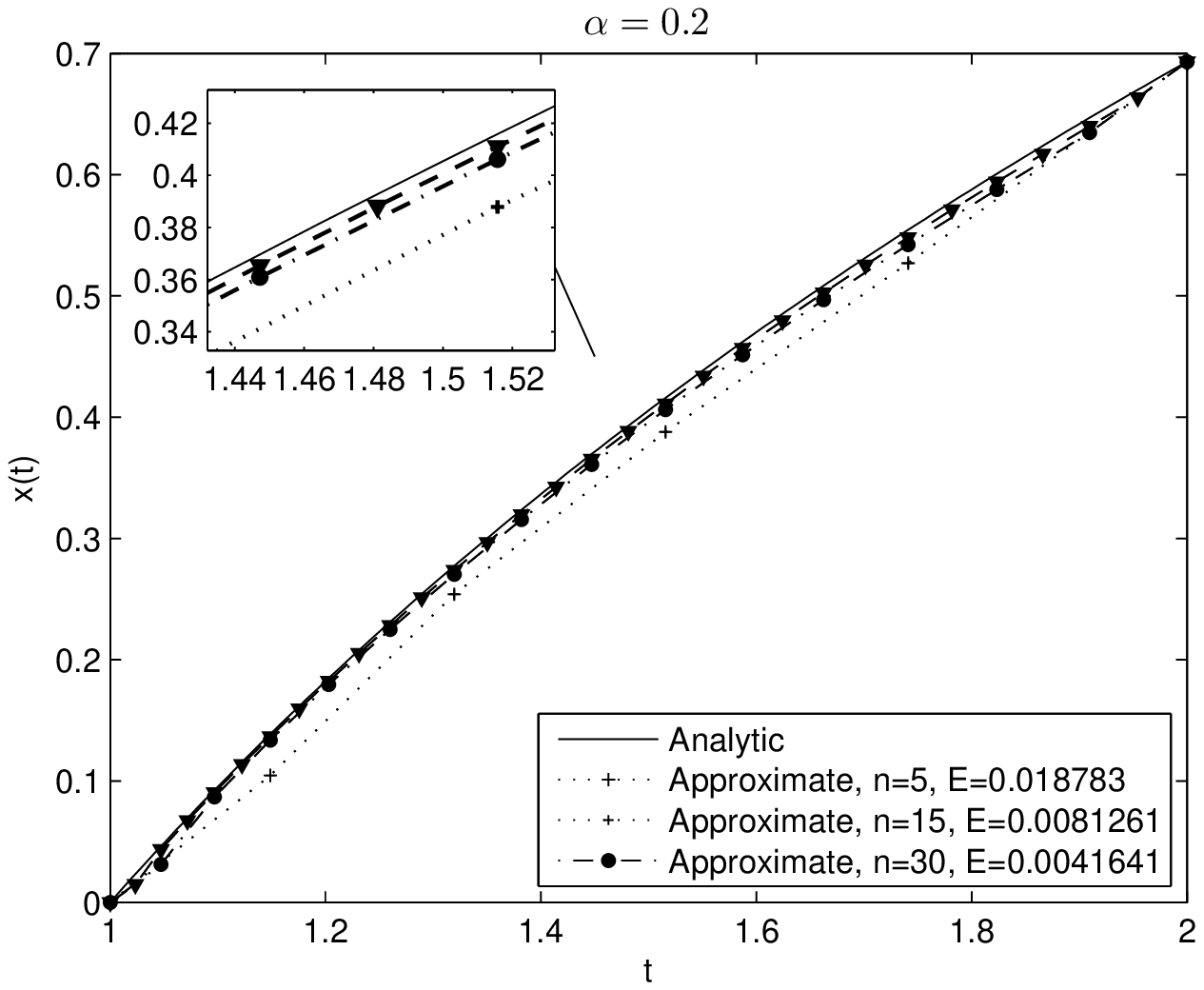}}
    \subfigure[$\a=0.5$]{\label{fig_ex_2_2}\includegraphics[scale=0.5]{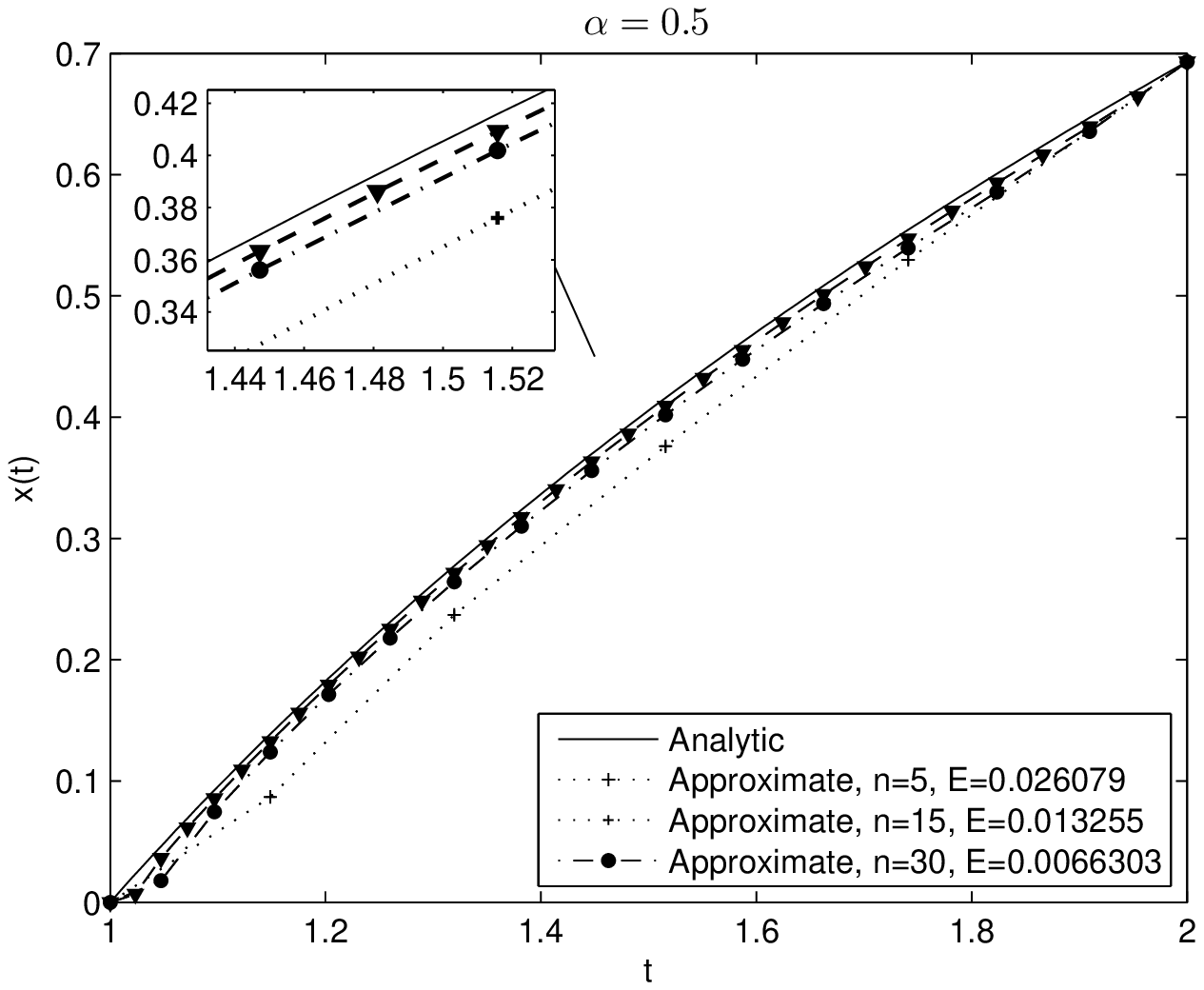}}
    \subfigure[$\a=0.7$]{\label{fig_ex_2_3}\includegraphics[scale=0.5]{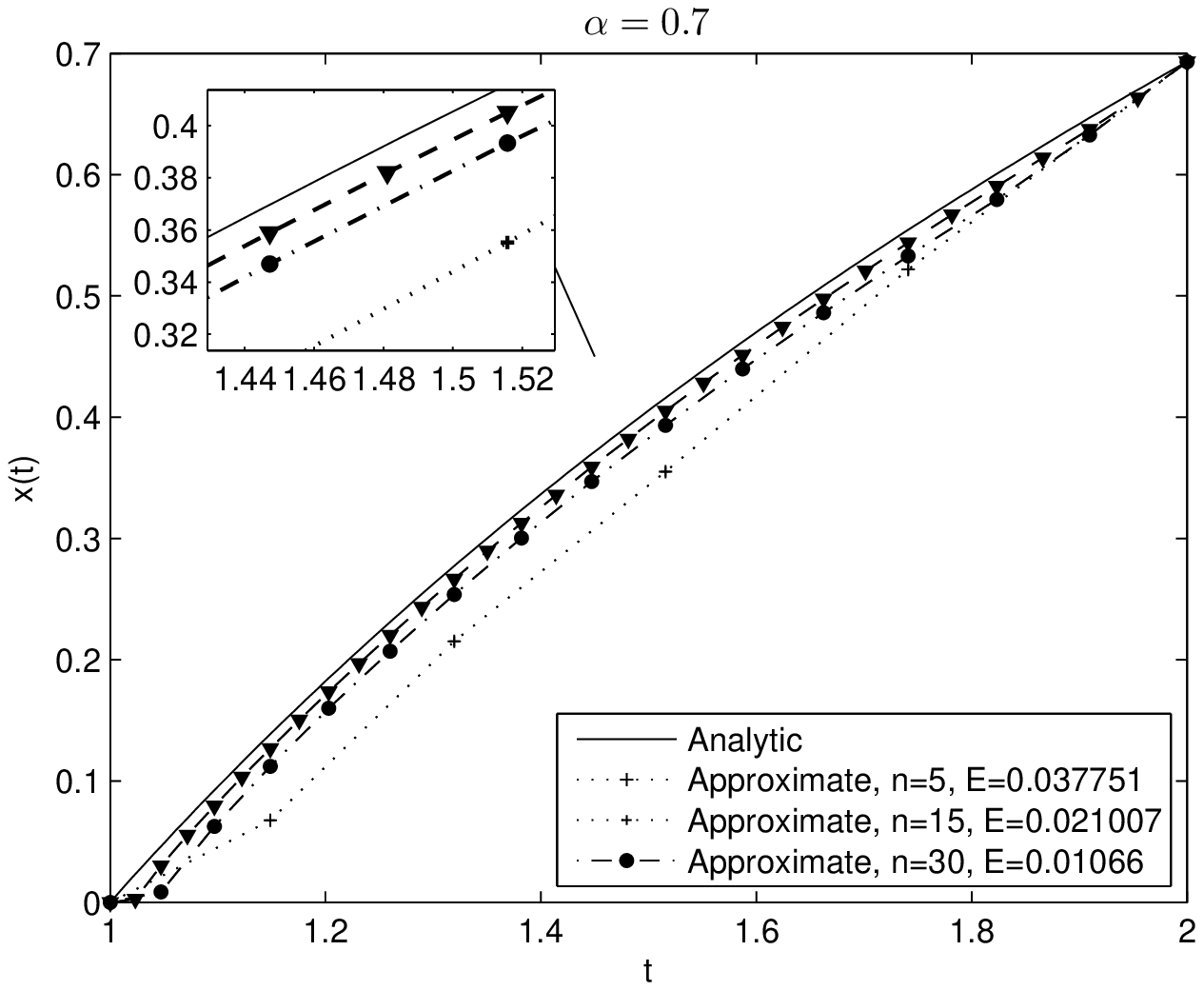}}
    \subfigure[$\a=0.9$]{\label{fig_ex_2_4}\includegraphics[scale=0.5]{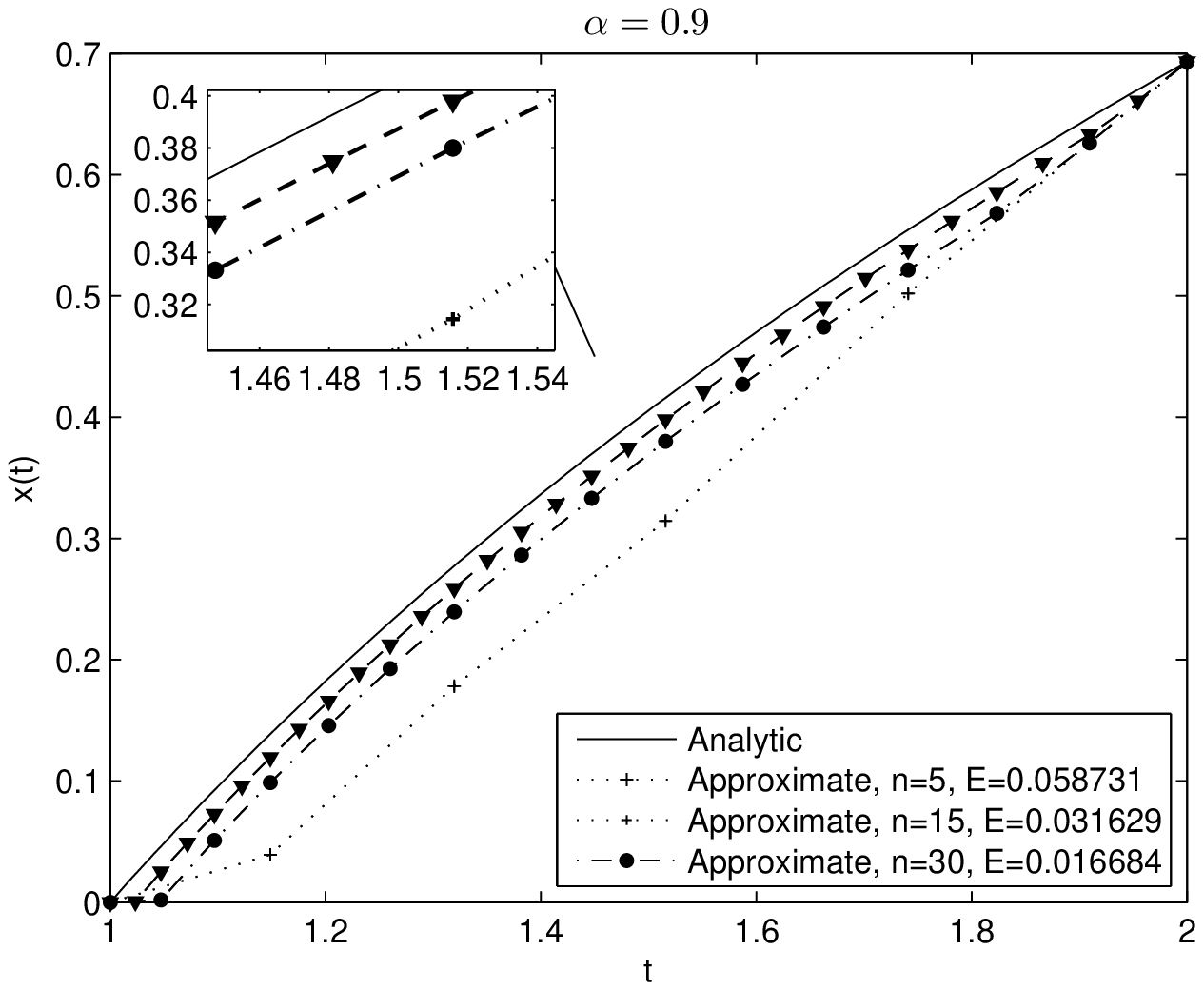}}
  \end{center}
\caption{Analytic and approximated solutions for problem of Example \ref{Ex2}.}
  \label{Example2}
\end{figure}

\end{example}
\FloatBarrier
\section{Conclusions}
For all numerical experiments presented above we used MatLab to obtain the results.
In Examples \ref{Ex1} and \ref{Ex2}, when we take a small number of mesh points ($n=5$) we get a not so good solution.
However, increasing the value of $n$, the error decreases and the numerical solution approaches the analytic solution, converging to it.
From the numerical results we also notice that, for the same values of $n$, as $\alpha$ increases the error also increases,
which makes sense taking into account formula \eqref{Error:Max}. We fix all parameters except $\alpha$, it is easy to check that the maximum
for error value increases as we increase the value of $\alpha$.

\begin{table}[h!]
\center
\begin{tabular}{c|c|cc|cc|cc|}
\cline{2-8}
\multicolumn{1}{c|}{}& $\alpha$ & n & E & n & E & n & E \\
\hline
\multirow{4}{*}{Example \ref{Ex1}} &  0.2   &  5  & $0.013201$   &  15 & $0.0044804$ & 30 & $0.0022501$ \\
                 &  0.5   &  5  & $0.015074$   &  15 & $0.005199$   & 30 & $0.002621$ \\
                 &  0.7   &  5  & $0.016174$   &  15 & $0.0056458$ & 30 & $0.0028553$ \\
                 &  0.9   &  5  & $0.017093$   &  15 & $0.0060434$ & 30 & $0.003068$ \\
\hline
\multirow{4}{*}{Example \ref{Ex2}} &  0.2   &  5  & $0.018783$   &  15 & $0.0081261$ & 30 & $0.0041641$ \\
                 &  0.5   &  5  & $0.026079$   &  15 & $0.013255$ & 30 & $0.0066303$ \\
                 &  0.7   &  5  & $0.037751$   &  15 & $0.021007$ & 30 & $0.01066$ \\
                 &  0.9   &  5  & $0.058731$   &  15 & $0.031629$ & 30 & $0.016684$ \\
\hline
\end{tabular}
\caption{Number of mesh points, $n$, with corresponding  error, $E$ from formula \eqref{Error}.}\label{table}
\end{table}
\FloatBarrier

\section*{Acknowledgments}

This work was supported by Portuguese funds through the CIDMA - Center for Research and Development in Mathematics and Applications,
and the Portuguese Foundation for Science and Technology (FCT-Funda\c{c}\~ao para a Ci\^encia e a Tecnologia), within project UID/MAT/04106/2013.



\label{lastpage}


\end{document}